\newcommand{\bgq}{\boldsymbol{\theta}}
\newcommand{\gf}{\varphi}
\newcommand{\setm}[2]{\{#1:#2\}}
\newcommand{\setbm}[2]{\bigl\{#1:#2\bigr\}}
\newcommand{\set}[1]{\{#1\}}
\newcommand{\tbf}{\textbf}
\newcommand \alg[1]{{\mathcal #1}}
\newcommand \bset[1]{{{\textup{Block}}(#1)}}
\newcommand \Tol[1] {\textup{Tol}(#1)}
\newcommand \var[1]{\pmb{\mathcal #1}}
\newcommand\lat{\pmb{{L}at}}
\newcommand\tlat{\pmb{T\kern-1.5 pt Lat}}
\newcommand \Set{\pmb{{S}\kern-0.5pt et}} 
\newcommand \setnn[1] {\var S^{(#1)}} 
\newcommand \setni[2] {\var S^{(#1)}_{#2}} 
\newcommand \anyvar[1]{\var V_{#1}}
\newcommand \nopvar{\var{V}}
\newcommand \rotlat[1] {\pmb{RLat}_{#1}} 
\newcommand \rottamp[2] {^{#2}\kern-1.5pt(#1\kern-1.5pt)}
\newcommand \setnntamp[2] {(#1\kern-1.5pt)^{\kern-1pt{#2}}}
\newcommand \combvar[2] {\pmb{C}_{\kern-1pt#1#2}}
\newcommand \indterm {t}
\newcommand \rotop[1]{g_{#1}}
\newcommand \setop[1] {f_{#1}}
\newcommand \sophop {h}
\newcommand \sterm {s}
\newcommand \tjoin{t_{\vee}}
\newcommand \tmeet{t_{\wedge}}
\newcommand\proponetfac{\textup{(P1)}}
\newcommand\proptwostfac{\textup{(P2)}}
\newcommand\propthreetcp{\textup{(P3)}}
\newcommand\propfourprop{\textup{(P4)}}
\newtheorem{theorem}{Theorem}
\newtheorem{proposition}[theorem]{Proposition}
\newtheorem{lemma}[theorem]{Lemma}
\theoremstyle{definition}
\newtheorem{problem}[theorem]{Problem}
\newtheorem{example}[theorem]{Example}
\newtheorem*{ackno}{Acknowledgment}
\newenvironment{enumeratei}{\begin{enumerate}[\upshape (i)]} {\end{enumerate}}
\begin{document}
\title{Independent joins of tolerance factorable varieties}

\author[I.\ Chajda]{Ivan Chajda}
\email{ivan.chajda@upol.cz}
\address{Palack\'y University Olomouc\\Department of Algebra and Geometry\\17. listopadu 12,
771 46 Olomouc, Czech Republic}

\author[G.\ Cz\'edli]{G\'abor Cz\'edli}
\email{czedli@math.u-szeged.hu}
\urladdr{http://www.math.u-szeged.hu/~czedli/}
\address{University of Szeged\\ Bolyai Institute\\Szeged,
Aradi v\'ertan\'uk tere 1\\ Hungary 6720}

\author[R.\ Hala\v s]{Radom\'\i r Hala\v s}
\email{radomir.halas@upol.cz} 
\address{Palack\'y University Olomouc\\Department of Algebra and Geometry\\17. listopadu 12,
771 46 Olomouc, Czech Republic}

\thanks{This research was supported the project Algebraic Methods in Quantum Logic, No.:
CZ.1.07/2.3.00/20.0051, 
by the NFSR of Hungary (OTKA), grant numbers   K77432 and K83219, and by T\'AMOP-4.2.1/B-09/1/KONV-2010-0005}

\dedicatory{Dedicated to B\'ela Cs\'ak\'any on his eightieth birthday}

\subjclass[2000]{Primary: 08A30.  Secondary: 08B99, 06B10, 20M07}
%

\keywords{Tolerance relation, quotient algebra by a tolerance, tolerance factorable algebra, independent join of varieties, product of varieties, rotational lattice, rectangular band}

\date{May 11, 2012}

\begin{abstract} 
Let $\lat$ denote the variety of lattices. In 1982, the second author proved that $\lat$ is \emph{{strongly} tolerance factorable}, that is, the members of $\lat$ have quotients in $\lat$ modulo tolerances, although $\lat$ has proper tolerances. We did not know any other nontrivial example of a strongly tolerance factorable variety. Now we prove that this property is preserved by forming independent joins (also called products) of varieties. This enables us to present infinitely many {strongly} tolerance factorable varieties with proper tolerances. Extending a recent result of G.\ Cz\'edli and G.\ Gr\"atzer, we show that  if $\var V$ is a strongly tolerance factorable variety, then the tolerances of $\var V$ are exactly the homomorphic images of  congruences of algebras in $\var V$. Our observation that  (strong) tolerance factorability is not necessarily preserved when passing from a variety to an equivalent one leads to an open problem.
\end{abstract}

\maketitle

\subsection*{Basic concepts}
Given an algebra $\alg A = (A, F)$, a binary reflexive, symmetric, and compatible
relation $T \subseteq A\times A=A^2$ is called a \emph{tolerance} on $\alg A$.  
The set of tolerances of $\alg A$ is denoted by $\Tol{\alg A}$.
A tolerance  which is not a congruence is called \emph{proper}. By a \emph{block} of a tolerance $T$ we mean  a maximal subset $B$ of $A$ such that $B^2\subseteq T$. 
 Let $\bset T$ denote the set of all blocks of $T$. It follows from Zorn's lemma that, for  $X\subseteq A$, we have that
\begin{equation}\label{HeRZrnLypp}
\text{$X^2\subseteq T$ if{f} $X\subseteq
 U$ for some $U\in \bset T$.}
\end{equation} 
Applying this observation to $X=\{a,b\}$, we obtain that $\bset T$ determines $T$. 
Furthermore, we also conclude that, 
for each $n$, each $n$-ary $f \in F$, and all $B_1,\dots,B_n\in\bset T$, there exists a $B\in\bset T$ such that 
\begin{equation}\label{mfaXCDsF}
\setm{f(b_1,\ldots,b_n)} {b_1\in B_1,\ldots, b_n\in B_n}\subseteq B\text.
\end{equation}
We say that $\alg A$ is \emph{$T$-factorable} if, for each $n$, each $n$-ary $f \in F$ and all $B_1,\dots,B_n\in\bset T$, 
the block $B$ in \eqref{mfaXCDsF} is uniquely determined. 
In this case, we  define $f(B_1,\ldots, B_n) := B$, and we call the algebra
$(\bset T,F)$ the \emph{quotient algebra}
$\alg A/T$ of $\alg A$  \emph{modulo the tolerance} $T$. 
{If $\alg A$ is $T$-factorable for all $T\in \Tol A$, then we say that $\alg A$ is \emph{tolerance factorable}.}  In what follows, we focus on  the following properties of varieties; $\var V$ denotes a variety of algebras. The \emph{tolerances} of $\var V$ are understood as the tolerances of algebras of $\var V$.
\begin{list}{}{\setlength{\leftmargin}{1.6cm}\setlength{\rightmargin}{1cm}}
\item[\proponetfac] 
$\var V$ is \emph{ tolerance factorable} if all of its members are tolerance factorable.
\item[\proptwostfac] 
$\var V$ is \emph{ strongly  tolerance factorable} if it is tolerance factorable and, 
for all  $\alg A\in \var V$ and all  $T\in\Tol{\alg A}$, $\alg A/T\in \var V$. 
\item[\propthreetcp] 
\emph{The tolerances of $\,\var V$ are the images of {its} congruences} if for each $\alg A\in\var V$ and every $T\in\Tol{\alg A}$, there exist an algebra $\alg B\in \var V$, a congruence $\bgq$ of $\alg B$ and a surjective homomorphism $\gf\colon \alg B\to \alg A$ such that 
$T=\setm{(\gf(a),\gf(b))} {(a,b)\in\bgq}$.
\item[\propfourprop] 
$\var V$ has proper tolerances if at least one of its members has a proper tolerance. %
\end{list}

\emph{Term equivalence}, in short, \emph{equivalence}, of varieties was introduced by W.\,D.~Neumann~\cite{neumann}. (He called it rational equivalence.) Instead of recalling the technical definition, we mention that the variety of Boolean algebras is  equivalent to that of Boolean rings. The variety of sets (with no operations) is denoted by $\Set$. 
Although the present paper is self-contained, for more information on tolerances   the reader is referred to the monograph I.~Chajda~\cite{chajdabook} .

\subsection*{Motivation and the target}
{Besides $\lat$ {and $\Set$}, no other  {strongly tolerance factorable variety with proper tolerances}  has been  known since 1982.}
Our initial goal was to find {some other ones}. {We prove} that independent joins, see later, preserve {each of the properties \proponetfac{}--\propfourprop{}}. This enables us to construct infinitely many, pairwise non-equivalent,  strongly tolerance factorable varieties with proper tolerances. Also, we show that if a variety is strongly tolerance factorable, then its tolerances are the images of its congruences, but the converse implication fails. Finally, we show that (strong) tolerance factorability is not always preserved when passing from a variety to an equivalent one, and we raise 
an open problem based on this fact.

\subsection*{Independent joins}
Let $n\in\mathbb N=\set{1,2,\ldots}$, and let $\anyvar 1,\ldots,\anyvar n$ be varieties of the same type. These varieties are called \emph{independent} if there exists an $n$-ary term $\indterm$ in their common type such that, for $i=1,\ldots,n$, 
$\anyvar i$ satisfies the identity $\indterm(x_1,\ldots,x_n)=x_i$. In this case, the join $\nopvar$ of 
the varieties $\anyvar 1,\ldots,\anyvar n$ is called an \emph{independent join} (in the lattice of all varieties of a given type). This concept was introduced by G.~Gr\"atzer, H.~Lakser, and J.~P\l onka~\cite{gglakplonka}. Independent joins of varieties are also called (direct) \emph{products}.

\begin{proposition}[W.~Taylor~\cite{taylor}, G.~Gr\"atzer, H.~Lakser, and J.~P\l onka \cite{gglakplonka}]\label{propIndep}
Assume that a variety $\nopvar$ is the independent join of its subvarieties $\anyvar 1, \cdots, \anyvar n$. 
\begin{enumeratei}
\item \label{xhTalma1} Every algebra $\alg A\in \nopvar$ is $($isomorphic to$)$ a product $\alg A_1\times\cdots\times \alg A_n$ with $\alg A_1\in\anyvar 1$, \dots, $\alg A_n\in\anyvar n$. These $A_i$ are uniquely determined up to isomorphism.
\item\label{xhTalma2}If $B$ is a subalgebra of  $\alg A=\alg A_1\times\cdots\times \alg A_n$ considered above, then there exist subalgebras $B_i$ of $\alg A_i$ $(i=1,\ldots,n)$ such that $B=B_1\times\cdots\times B_n$.
\item\label{xhTalma3} Every tolerance $T$ of $\alg A$ is of the form $\,T_1\times\cdots\times T_n$ such that $T_i$ is a tolerance of $\alg A_i$ for $i=1\ldots,n$. {If $T$ is a congruence, then so are the $T_i$.}
\end{enumeratei}
\end{proposition}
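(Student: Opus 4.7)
The plan is to exploit the term $\indterm$ in every step. The crucial identity is
\begin{equation*}
\indterm\bigl(\indterm(x_{11},\dots,x_{1n}),\,\indterm(x_{21},\dots,x_{2n}),\dots,\indterm(x_{n1},\dots,x_{nn})\bigr)=\indterm(x_{11},x_{22},\dots,x_{nn}),
\end{equation*}
which holds in each $\anyvar i$ since both sides reduce to $x_{ii}$ there, and therefore in their join $\nopvar$. The identity $\indterm(x,x,\dots,x)=x$, as well as the ``commutation'' identities $\indterm(c_1,\dots,f(a_1,\dots,a_k),\dots,c_n)=f(\indterm(c_1,\dots,a_1,\dots,c_n),\dots,\indterm(c_1,\dots,a_k,\dots,c_n))$ for every fundamental operation $f$, arise the same way. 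In every algebra of $\nopvar$, the term $\indterm$ thereby serves to ``glue $n$ elements into one'', and all three parts will reduce to this principle.

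For part~(i), given $\alg A\in\nopvar$, I would introduce the relations
\begin{equation*}
\bgq_i := \setbm{(a,b)\in A^2}{\indterm(c_1,\dots,c_{i-1},a,c_{i+1},\dots,c_n)=\indterm(c_1,\dots,c_{i-1},b,c_{i+1},\dots,c_n)\text{ for all }c_j\in A}.
\end{equation*}
Each $\bgq_i$ is manifestly an equivalence relation, and the commutation identities above make it a congruence. The quotient $\alg A/\bgq_i$ satisfies $\indterm(x_1,\dots,x_n)=x_i$, so $\alg A/\bgq_i\in\anyvar i$. Next, $\bgq_1\cap\dots\cap\bgq_n$ is the diagonal of $A$: specializing the $c_j$'s in the definition of $\bgq_i$ and iterating over $i$ together with $\indterm(x,\dots,x)=x$ forces $a=b$. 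Finally, the natural map $\alg A\to\prod_i \alg A/\bgq_i$ is surjective since the displayed identity yields that $\indterm(\alpha_1,\dots,\alpha_n)$ is $\bgq_i$-equivalent to $\alpha_i$ for every $i$ and every choice of $\alpha_1,\dots,\alpha_n\in A$. Uniqueness of the factors follows because each $\bgq_i$ is defined intrinsically from $\alg A$.

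Parts~(ii) and~(iii) now share one argument. For~(ii), set $B_i := \pi_i(B)$, where $\pi_i$ is the projection of $\alg A=\alg A_1\times\cdots\times \alg A_n$ onto $\alg A_i$; the inclusion $B\subseteq B_1\times\cdots\times B_n$ is clear, and conversely, given $(b_1,\dots,b_n)$ in the right-hand side with lifts $\beta_i\in B$ of $b_i$, the element $\indterm(\beta_1,\dots,\beta_n)\in B$ has $i$-th coordinate equal to $b_i$ because $\indterm$ acts as the $i$-th projection on $\alg A_i\in\anyvar i$, so $\indterm(\beta_1,\dots,\beta_n)=(b_1,\dots,b_n)$. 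For~(iii), set $T_i := \setm{(\pi_i(a),\pi_i(b))}{(a,b)\in T}$; this is routinely a tolerance of $\alg A_i$, the inclusion $T\subseteq T_1\times\cdots\times T_n$ is trivial, and the reverse uses the same trick: given witnesses $(x^{(i)},y^{(i)})\in T$ with $\pi_i(x^{(i)})=a_i$ and $\pi_i(y^{(i)})=b_i$, compatibility of $T$ with $\indterm$ produces $(\indterm(x^{(1)},\dots,x^{(n)}),\indterm(y^{(1)},\dots,y^{(n)}))\in T$, and this pair equals $(a,b)$ coordinate by coordinate. When $T$ is a congruence, each $T_i$ inherits transitivity by a coordinate-wise argument that uses reflexivity of $T$ in the unaffected coordinates.

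The main obstacle is the clean execution of part~(i): one must verify, via the Mal'cev-type identities derivable from the independence, that $\bgq_i$ is a congruence, that $\alg A/\bgq_i\in\anyvar i$, that the family $\{\bgq_i\}$ has trivial meet, and that the induced product map is surjective. Once~(i) is in place, parts~(ii) and~(iii) fall out as immediate applications of the ``glue-with-$\indterm$'' principle.
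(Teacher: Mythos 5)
Your overall strategy for part~(i) --- defining $\bgq_i$ as the ``kernel of the $i$-th coordinate'', showing that the $\bgq_i$ intersect trivially, and using $\indterm$ to prove surjectivity of the natural map into the product of the quotients --- is the standard Gr\"atzer--Lakser--P\l onka argument, and your treatment of parts~(ii) and~(iii) matches what the paper does: the paper quotes (i) and (ii) without proof and justifies the tolerance version of (iii) by exactly your device of regarding $T$ as a subalgebra of $\alg A_1^2\times\cdots\times\alg A_n^2$ and applying (ii). However, there is a genuine error in your part~(i). The ``commutation'' identity $\indterm(c_1,\dots,f(a_1,\dots,a_k),\dots,c_n)=f(\indterm(c_1,\dots,a_1,\dots,c_n),\dots,\indterm(c_1,\dots,a_k,\dots,c_n))$ does \emph{not} hold in $\nopvar$: in $\anyvar j$ with $j\neq i$ its left-hand side evaluates to $c_j$ while its right-hand side evaluates to $f(c_j,\dots,c_j)$, and these coincide only if $f$ is idempotent in $\anyvar j$, which independence does not guarantee (take $n=2$, $\anyvar 2$ defined by $\indterm(x,y)=y$, and an algebra of $\anyvar 2$ on which a binary $f$ is constant). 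Since this identity is precisely what you invoke to show that $\bgq_i$ is compatible --- the heart of part~(i) --- the proof as written breaks at that point.

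The step is repairable. The identity that does hold in every $\anyvar j$, and hence in $\nopvar$, is the one in which $f$ occupies \emph{every} argument of $\indterm$, namely $\indterm\bigl(f(x^1_1,\dots,x^1_k),\dots,f(x^n_1,\dots,x^n_k)\bigr)=f\bigl(\indterm(x^1_1,\dots,x^n_1),\dots,\indterm(x^1_k,\dots,x^n_k)\bigr)$, both sides reducing to $f(x^j_1,\dots,x^j_k)$ in $\anyvar j$. Combine it with the absorption law $\indterm(c_1,\dots,\indterm(d_1,\dots,v,\dots,d_n),\dots,c_n)=\indterm(c_1,\dots,v,\dots,c_n)$, which follows from your ``crucial identity'' together with $\indterm(x,\dots,x)=x$: writing $\indterm(c_1,\dots,f(a_1,\dots,a_k),\dots,c_n)$ as $\indterm(c_1,\dots,\indterm(f(c_1,\dots,c_1),\dots,f(a_1,\dots,a_k),\dots,f(c_n,\dots,c_n)),\dots,c_n)$ and expanding the inner term by the diagonal identity yields the compatibility of $\bgq_i$ from $(a_l,b_l)\in\bgq_i$. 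A second, smaller gap: from $\alg A/\bgq_i$ satisfying $\indterm(x_1,\dots,x_n)=x_i$ you conclude $\alg A/\bgq_i\in\anyvar i$; this needs the extra observation that for every identity $p=q$ of $\anyvar i$ the identity $\indterm(z_1,\dots,p,\dots,z_n)=\indterm(z_1,\dots,q,\dots,z_n)$ holds throughout $\nopvar$, whence any member of $\nopvar$ on which $\indterm$ acts as the $i$-th projection satisfies $p=q$. With these two repairs your argument is complete and self-contained, which is more than the paper attempts for this cited result.
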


Although part \eqref{xhTalma3} above is stated only for congruences in \cite{taylor}, the one-line argument ``regard $T$ as a subalgebra of $\alg A_1^2\times\cdots\times \alg A_n^2$ and apply  part \eqref{xhTalma2}'' of \cite{taylor} also works if $T$ is a tolerance rather than a congruence.

\subsection*{Results and examples} 
The properties \proponetfac--\propfourprop{} are not independent from each other and from congruence permutability. We know from H.~Werner~\cite{werner},  see also J.\,D.\,H.\ Smith~\cite{smith}, that a variety is congruence permutable if{f} it has no  proper tolerances. Obviously, a variety without proper tolerances is strongly tolerance factorable and its tolerances are the images of its congruences.   Also, we present the following statement, which generalizes the result of G.~Cz\'edli and G.~Gr\"atzer~\cite{czggg}. (The statements of this section will be proved in the next one.)

\begin{proposition}\label{czggggener}~ 
\begin{enumeratei}
\item\label{czggggeneri} Assume that $\alg A$ is a tolerance factorable algebra and $T\in\Tol{\alg A}$. Then there exist an algebra $\alg B$ $($of the same type as $\alg A)$, a congruence $\bgq$ of $\alg B$,  and a surjective homomorphism $\gf\colon \alg B\to\alg A$  such that  $T=\gf(\bgq)$, where $\gf(\bgq)=\set{(\gf(x),\gf(x)): (x,y)\in\bgq}$.  
\item\label{czggggenerii}
If a variety is strongly tolerance factorable, then its tolerances are the images of its congruences.
\end{enumeratei}
\end{proposition}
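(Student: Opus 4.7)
The plan for part~(\ref{czggggeneri}) is to build $\alg B$ as a subalgebra of the direct product $\alg A\times\alg A/T$; note that $\alg A/T=(\bset T,F)$ is available because $\alg A$ is tolerance factorable. Specifically, I would take the universe of $\alg B$ to be
\[
\setm{(a,U)}{a\in U\in \bset T},
\]
with component-wise operations inherited from $\alg A\times\alg A/T$. To see that this set is closed under every $n$-ary $f\in F$, suppose $(a_i,U_i)\in\alg B$ for $i=1,\ldots,n$; by $T$-factorability there is a unique block $U\in\bset T$ containing the image set described in \eqref{mfaXCDsF}, and this $U$ is precisely the value $f(U_1,\ldots,U_n)$ computed in $\alg A/T$. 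Since $a_i\in U_i$ for every $i$, we have $f(a_1,\ldots,a_n)\in U$, so $(f(a_1,\ldots,a_n),f(U_1,\ldots,U_n))\in\alg B$. Hence $\alg B$ is a subalgebra.

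Next, I would let $\gf\colon\alg B\to\alg A$ be the first projection $(a,U)\mapsto a$. It is a homomorphism, and its surjectivity follows from \eqref{HeRZrnLypp} applied to the singleton $X=\set{a}$: reflexivity of $T$ gives $\set{a}^2\subseteq T$, so $a$ lies in some block. Let $\bgq$ be the kernel of the second projection, so that $(a,U)\,\bgq\,(a',U')$ holds iff $U=U'$; this is a congruence of $\alg B$. Then $(a,a')\in\gf(\bgq)$ holds precisely when $a$ and $a'$ lie in a common block $U\in\bset T$, which by \eqref{HeRZrnLypp} applied to $X=\set{a,a'}$ is equivalent to $\set{a,a'}^2\subseteq T$, that is, to $(a,a')\in T$. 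This yields $\gf(\bgq)=T$ and completes part~(\ref{czggggeneri}).

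For part~(\ref{czggggenerii}), suppose $\var V$ is strongly tolerance factorable and $\alg A\in\var V$. Then the quotient $\alg A/T$ also lies in $\var V$ by definition of strong tolerance factorability, hence $\alg A\times\alg A/T\in\var V$, and so the $\alg B$ constructed above, being a subalgebra of this product, belongs to $\var V$. The congruence $\bgq$ and the homomorphism $\gf$ are exactly those supplied by part~(\ref{czggggeneri}), so the conclusion follows immediately.

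The only delicate step is the closure of $\alg B$ under the fundamental operations, and this is precisely where the $T$-factorability hypothesis is used essentially (the block $U$ in \eqref{mfaXCDsF} must be unique so that the second coordinate of $f((a_1,U_1),\ldots,(a_n,U_n))$ is well defined). Everything else is a direct unwinding of \eqref{HeRZrnLypp} and the definitions, so I do not expect any genuine obstacle.
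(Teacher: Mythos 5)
Your proposal is correct and follows essentially the same route as the paper: the subalgebra $\setm{(a,U)}{a\in U\in \bset T}$ of $\alg A\times(\alg A/T)$, the kernel of the second projection as $\bgq$, the first projection as $\gf$, and \eqref{HeRZrnLypp} for both surjectivity and the identity $\gf(\bgq)=T$. The only difference is that you spell out the closure of $\alg B$ under the operations, which the paper leaves implicit.
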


Tolerance factorability does not imply strong tolerance factorability.  For example, 
let $\var V$ be a nontrivial proper subvariety of the variety $\lat$ of all lattices. We know from G.~Cz\'edli~\cite{czg82} that $\lat$ is {strongly tolerance factorable}; see also G.~Gr\"atzer and G.\,H.~Wenzel~\cite{gggwenz} for an alternative proof. 
Consequently, $\var V$ is tolerance factorable. However, {it is not strongly tolerance factorable} by  G.~Cz\'edli~\cite[Theorem 3]{czg82}. 

Our main achievement is the following statement.

\begin{theorem}\label{thmmain} Assume that a variety $\nopvar$ is the independent join of its subvarieties $\anyvar 1,\ldots,\anyvar n$. 
{Consider one of the properties
\begin{enumeratei}
\item strong tolerance factorability,
\item tolerance factorability,
\item the tolerances of the variety are the  images of its congruences.
\end{enumeratei}
If this property holds for all the $\anyvar i$, then it also holds for $\nopvar$.}
\end{theorem}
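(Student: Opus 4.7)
The plan is to reduce each of the three claims to Proposition~\ref{propIndep}(iii) combined with a product description of the blocks of a tolerance. Fix $\alg A\in\nopvar$ decomposed as $\alg A=\alg A_1\times\cdots\times\alg A_n$ with $\alg A_i\in\anyvar i$, and $T\in\Tol{\alg A}$ decomposed as $T=T_1\times\cdots\times T_n$ with $T_i\in\Tol{\alg A_i}$.

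The crux is to establish
\[
\bset T=\setm{B_1\times\cdots\times B_n}{B_i\in\bset{T_i},\ i=1,\ldots,n}.
\]
For ``$\supseteq$'', we have $(B_1\times\cdots\times B_n)^2\subseteq T$; if some $T$-preserving $B'$ contains $B_1\times\cdots\times B_n$, then each projection $B'_i$ is a $T_i$-preserving superset of $B_i$, so $B'_i=B_i$ by maximality, forcing $B'\subseteq B_1\times\cdots\times B_n$. For ``$\subseteq$'', given $B\in\bset T$ let $B_i$ be the projection of $B$ to $A_i$. Then $B_i^2\subseteq T_i$, so by \eqref{HeRZrnLypp} there is $C_i\in\bset{T_i}$ with $B_i\subseteq C_i$. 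Thus $B\subseteq C_1\times\cdots\times C_n\in\bset T$ by the ``$\supseteq$'' direction already proved, and maximality of $B$ yields equality.

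Given this description, (ii) and (i) follow quickly. For tolerance factorability, the coordinatewise action of any $k$-ary fundamental operation $f$ on blocks $B^{(j)}=C^{(j)}_1\times\cdots\times C^{(j)}_n\in\bset T$ produces images lying in $D_1\times\cdots\times D_n$, where $D_i\in\bset{T_i}$ is the unique block supplied by the tolerance factorability of $\anyvar i$; uniqueness of this product block gives (ii). Moreover, the quotient $\alg A/T$ is then canonically isomorphic to $(\alg A_1/T_1)\times\cdots\times(\alg A_n/T_n)$, and under strong tolerance factorability of each $\anyvar i$ every factor lies in $\anyvar i$, giving $\alg A/T\in\nopvar$ and hence (i).

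For (iii), apply the hypothesis to each $T_i$ to obtain $\alg B_i\in\anyvar i$, a congruence $\bgq_i$ of $\alg B_i$, and a surjective homomorphism $\gf_i\colon\alg B_i\to\alg A_i$ with $T_i=\gf_i(\bgq_i)$. Set $\alg B:=\alg B_1\times\cdots\times\alg B_n\in\nopvar$, $\bgq:=\bgq_1\times\cdots\times\bgq_n$, and $\gf:=\gf_1\times\cdots\times\gf_n$; then $\bgq$ is a congruence of $\alg B$, $\gf$ is a surjective homomorphism, and a coordinatewise check using $T_i=\gf_i(\bgq_i)$ yields $T=\gf(\bgq)$. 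The main obstacle in the whole plan is the block-product description above, which requires a careful combination of the projection trick with maximality and \eqref{HeRZrnLypp}; once it is in hand, the remaining arguments reduce to routine coordinatewise verifications.
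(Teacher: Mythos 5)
Your proposal is correct and follows essentially the same route as the paper: the heart of both arguments is the block-decomposition $\bset T=\bset{T_1}\times\cdots\times\bset{T_n}$ (the paper's Lemma~\ref{lemmanoskewblock}, proved by the same projection-plus-maximality argument), after which (i) and (ii) follow by the identical coordinatewise uniqueness check and the isomorphism $\alg A/T\cong \alg A_1/T_1\times\cdots\times\alg A_n/T_n$. For (iii) the paper merely remarks that Proposition~\ref{propIndep} ``easily implies'' the claim, and your explicit construction with $\alg B_1\times\cdots\times\alg B_n$, $\bgq_1\times\cdots\times\bgq_n$, and $\gf_1\times\cdots\times\gf_n$ is exactly the intended filling-in of that remark.
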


Now we are ready to give several examples {for strongly tolerance factorable varieties with proper tolerances.} 
It would  be easy to give such examples by taking varieties equivalent to $\lat$. (For example, we could 
replace the binary join by the $n$-ary operation $f(x_1,\ldots,x_n):=x_1\vee x_2$.) 
Hence we will give pairwise non-equivalent varieties  even if Example~\ref{hnplMrl} implies  the surprising fact that strong tolerance factorability is not necessarily preserved when passing from a variety to an equivalent one.

For $2\leq n\in \mathbb N$ and $1\leq i\leq n$, let $\setni ni$ be the variety consisting of all algebras $(X,\setop n)$ such that  $X$ is a nonempty set and $\setop n$ is an $n$-ary operation symbol inducing the $i$-th projection on $X$. That is, $\setni ni$ is of type $\set{\setop n}$, and it is defined by the identity $\setop n(x_1,\ldots,x_n)=x_i$. Let $\setnn n=\setni n1\vee\cdots\vee \setni nn$ and $\setnn 1=\Set$.

\begin{example}\label{exbands} The varieties $\setnn n$, {$n\in\mathbb N$, are strongly tolerance factorable  and pairwise non-equivalent, and they have proper tolerances}.
\end{example}

Notice that $\setnn 2$ is the variety of \emph{rectangular bands}, which are idempotent semigroups satisfying the identity $xyx=x$. See A.\,H.~Clifford~\cite{clifford}, who introduced this concept, and B.~J\'onsson and C.~Tsinakis \cite{jonssontsinakis}.

Next, consider lattices with an additional unary operation $\rotop n$ {that induces an  automorphism of the lattice structure such that the identity $\rotop n^n(x)=x$ (where $\rotop n^n(x)$  denotes the $n$-fold iteration $\rotop n\bigl(\rotop n(\dots \rotop n(x)\dots)\bigr)$ of $\rotop n$) holds}. We can call them \emph{rotational lattices of order $n$}.
The variety of these lattices is denoted by $\rotlat n$. Note that $\rotlat 1$ is equivalent to $\lat$ while $\rotlat 2$ consists of \emph{lattices with involution}, which were studied, for example, in 
I.~Chajda and G.~Cz\'edli~\cite{chczinvol}. {Note also that $\rotlat n\subseteq\rotlat m$ if{f} $n\mid m$.}

\begin{example}\label{exrotlats} The varieties $\rotlat n$, $n\in\mathbb N$, {are strongly tolerance factorable  and pairwise non-equivalent, and they have proper tolerances}. Moreover, none of them is  equivalent {to a variety} given in Example~\ref{exbands}.
\end{example}

Armed with Theorem~\ref{thmmain}, one can give some more sophisticated examples. For example, we present the following. Let $\sophop$ be a binary operation symbol, and let $m,n\in\mathbb N$. We consider the type $\tau_{mn}=\set{\vee,\wedge,\rotop m,
\setop n,\sophop}$. Define the action of $\setop n$ and $\sophop$ on the algebras of $\rotlat m$ as first projections. This way these algebras become $\tau_{mn}$-algebras and they form a variety $\rottamp{\rotlat m}{n}$. 
Similarly, on the members of $\setnn n$, we define $\vee$, $\wedge$, and $\rotop m$ as first projections and  $\sophop$  as the binary second projection.
The algebras we obtain constitute a variety $\setnntamp{\setnn n}{m}$ of type $\tau_{mn}$. Let $\combvar mn={\rottamp{\rotlat m}{n}}\vee \setnntamp{\setnn n}{m}$.

\begin{example}\label{excomb}
The varieties $\combvar mn$, $m,n\in\mathbb N$, are {strongly tolerance factorable and they have proper tolerances}. Furthermore, $\combvar mn$ is equivalent to $\combvar ij$ if{f} $(i,j)=(m,n)$.
\end{example}

Note that the varieties in Example~\ref{exrotlats} are congruence distributive while those in Examples~\ref{exbands} and \ref{excomb} satisfy no nontrivial congruence lattice identity.

{Next, in the language of  lattices, we consider the ternary lattice terms 
$\tjoin(x,y,z)=x\vee(y\wedge z)$ and $\tmeet(x,y,z)=x\wedge(y\vee z)$. Clearly, the identities $x\vee y=\tjoin(x,y,y)$ and $x\wedge y=\tmeet(x,y,y)$ hold in all lattices. This 
motivates the following definition of another variety in the language of $\set{\tjoin,\tmeet}$ as follows. 
In each of the six usual laws defining $\lat$, we replace $\vee$ and $\wedge$ by
$\tjoin(x,y,y)$ and $\tmeet(x,y,y)$. For example, the absorption law $x=x\vee (x\wedge y)$ turns into the identity 
$x=\tjoin\bigl(x, \tmeet(x,y,y), \tmeet(x,y,y)\bigr)$. The six identities we obtain this way together with the identities 
$\tjoin(x,y,z)= \tjoin(x, \tmeet(y,z,z), \tmeet(y,z,z))$ and $\tmeet(x,y,z)= \tmeet(x, \tjoin(y,z,z), \tjoin(y,z,z))$ define a variety, which will be  denoted by $\tlat$.}

\begin{example}\label{hnplMrl} 
$\tlat$ is equivalent to $\lat$.  Hence
the tolerances of $\tlat$ are the images of its congruences. However, 
 $\tlat$ is not tolerance factorable. 
\end{example}

Let $\alg A\in\tlat$ and $T\in\Tol{\alg A}$. Although   {$\tlat$ is not tolerance factorable, the fact that  {it} is equivalent to {a tolerance factorable variety (which is
$\lat$) yields a natural way of defining $\alg A/T$}.  Namely, $\alg A\in \tlat$ has an alter ego $\alg A'\in \lat$ with the same tolerances, so we can take the quotient $\alg B':=\alg A'/T$ defined in $\lat$, and we can let $\alg A/T$ be the alter ego of $\alg B'$ in $\tlat$.
Clearly, the strong tolerance factorability of $\lat$ implies that  $\alg A/T\in \tlat$.}

{Since $\tlat$ is only an {``}artificial{''}  variety, we raise the following problem.}

\begin{problem}
{Is there a {well-known variety $\var V$ such that although $\var V$ is not tolerance factorable, it} 
is equivalent to {some tolerance factorable} (possibly ''artificial'') variety?}
\end{problem}

\subsection*{Proofs}
\begin{proof}[{Proof of Proposition~\ref{czggggener}}] We generalize the idea of  G.~Cz\'edli and G.~Gr\"atzer~\cite{czggg}. Assume that  $\alg A=(A,F)$ is a tolerance factorable algebra and $T\in\Tol{\alg A}$. If $\alg A$ belongs to a strongly tolerance factorable variety $\var V$, then all the algebras we construct in the proof will clearly belong to $\var V$.

The quotient algebra $\alg A/T=\bigl(\bset T,F\bigr)$, defined according to formula \eqref{mfaXCDsF}, makes sense. So does the direct product  $\alg C=\alg A\times (\alg A/T)$.
Denoting $\setm{(x,Y)\in A\times \bset T}{x\in Y}$ by $D$, the construction implies that $\alg D=(
D,F)$ is a subalgebra of $\alg C$. This $\alg D$ will play the role of $\alg B$.

Define $\bgq=\setbm{\bigl((x_1,Y_1),(x_2,Y_2)\bigr) \in {D}^2}{Y_1=Y_2  }$. As the kernel of the second projection from $\alg D$ to $\alg A/T$, it is a congruence on {$\alg D$}. 
The first projection $\gf\colon \alg D\to \alg A$, $(x,Y)\mapsto x$, is a surjective homomorphism  since, for every $x\in A$,  \eqref{HeRZrnLypp} allows us to extend $\set x$ to a block of $T$.

Clearly, if $\bigl((x_1,Y_1),(x_2,Y_2)\bigr)\in\bgq$, then 
$\set{x_1,x_2}\subseteq Y_1=Y_2\in\bset T$
implies that $\bigl(\gf(x_1,Y_1),\gf(x_2,Y_2)\bigr)= (x_1,x_2)\in T$. Conversely, assume that  $(x_1,x_2)\in T$. Then, by \eqref{HeRZrnLypp}, there is a $Y\in\bset T$ with $\set{x_1,x_2}\subseteq Y$. Hence 
$(x_1,Y), (x_2,Y)\in D$, $\bigl((x_1,Y), (x_2,Y)\bigr)\in\bgq$, and $x_i= \gf(x_i,Y)$ yield the desired equality 
$T=\setbm{\bigl(\gf(x_1,Y_1),\gf(x_2,Y_2)\bigr)} {  \bigl((x_1,Y_1),(x_2,Y_2)\bigr)\in\bgq }$.  
\end{proof}

\begin{lemma}\label{lemmanoskewblock}
Assume that $T$ is as in Proposition \textup{\ref{propIndep}}\eqref{xhTalma3} and $B\in\bset T$. 
Then there exist $B_i\in \bset{T_i}$, $i\in\set{1,\ldots,n}$, such that $B=B_1\times\cdots\times B_n$, and they are uniquely determined. Furthermore, 
$\bset T=\bset{T_1}\times\cdots\times \bset{T_n}$.
\end{lemma}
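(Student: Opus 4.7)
My plan is to exploit Proposition~\ref{propIndep}\eqref{xhTalma3}, which gives us $T=T_1\times\cdots\times T_n$, together with the maximality characterization of blocks provided by \eqref{HeRZrnLypp}. The intuition is that a block of a product tolerance cannot be ``skew'' because one can always replace it by the product of its projections, whose square still sits inside $T$, and then maximality forces equality.

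In more detail, I would first handle the decomposition of a given block $B\in\bset T$. Let $\pi_i\colon A_1\times\cdots\times A_n\to A_i$ denote the $i$-th projection, and set $B_i:=\pi_i(B)$ for $i=1,\dots,n$. Since $T=T_1\times\cdots\times T_n$ and $B^2\subseteq T$, any two elements of $B_i$ lift to elements of $B$, whose $i$-th coordinates are therefore $T_i$-related; hence $B_i^2\subseteq T_i$. By \eqref{HeRZrnLypp} applied to $T_i$, each $B_i$ is contained in some block $B_i'\in\bset{T_i}$. Then $B\subseteq B_1'\times\cdots\times B_n'$ and, again using $T=T_1\times\cdots\times T_n$, $(B_1'\times\cdots\times B_n')^2\subseteq T$. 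Since $B$ is a \emph{maximal} subset of $A$ whose square lies in $T$, this forces $B=B_1'\times\cdots\times B_n'$, and in particular $B_i=\pi_i(B)=B_i'\in\bset{T_i}$. Uniqueness is automatic, since any decomposition $B=B_1\times\cdots\times B_n$ must satisfy $B_i=\pi_i(B)$.

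For the ``Furthermore'' part, the inclusion $\bset T\subseteq\bset{T_1}\times\cdots\times\bset{T_n}$ is exactly what we just proved. For the reverse inclusion, take arbitrary $B_i\in\bset{T_i}$ and set $B:=B_1\times\cdots\times B_n$; then $B^2\subseteq T_1\times\cdots\times T_n=T$, so by \eqref{HeRZrnLypp} there is a block $B^\ast\in\bset T$ with $B\subseteq B^\ast$. By the first part, $B^\ast=B_1^\ast\times\cdots\times B_n^\ast$ with $B_i^\ast\in\bset{T_i}$, and the inclusion $B\subseteq B^\ast$ yields $B_i\subseteq B_i^\ast$; the maximality of each $B_i$ as a block of $T_i$ then forces $B_i=B_i^\ast$, so $B=B^\ast\in\bset T$.

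The only potential pitfall is in concluding that the projections $\pi_i(B)$ are themselves blocks (and not merely subsets of blocks) of $T_i$; but the maximality of $B$ as a block of the product tolerance $T$ does this work for us essentially for free, so no genuine obstacle remains once Proposition~\ref{propIndep}\eqref{xhTalma3} is invoked.
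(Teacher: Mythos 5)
Your proof is correct and follows essentially the same route as the paper's: project $B$ onto each factor, observe that the projections have squares inside the $T_i$, and play the maximality of $B$ off against the product $T=T_1\times\cdots\times T_n$. The only (harmless) difference is that you enlarge each projection to a block $B_i'$ first and invoke the maximality of $B$ once, whereas the paper first shows $B$ equals the product of its projections and then checks maximality of each $B_i$ by a separate one-coordinate-at-a-time enlargement; the reverse inclusion in the ``Furthermore'' part is handled identically.
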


\begin{proof} Let $\pi_i$ denote the projection map $A\to A_i$, $(x_1,\ldots,x_n)\mapsto x_i$. Define $B_i:=\pi_i(B)$. First we show that $B_1\in\bset{T_1}$. If $a_1,b_1\in B_1$, then $(a_1,a_2,\ldots, a_n),(b_1,b_2,\ldots,b_n)\in B$ for some $a_j,b_j\in A_j$, $2\leq j\leq n$. Hence $B^2\subseteq T$ implies that $(a_1,b_1)\in T_1$. This gives that $B_1^2\subseteq T_1$, and we obtain $B_i^2\subseteq T_i$ for all $i\in\set{1,\ldots,n}$ by symmetric arguments. Thus 
\[(B_1\times\cdots\times B_n)^2\subseteq T_1\times\cdots\times T_n =T,
\]
which together with $B\in\bset T$ and the obvious 
$B\subseteq B_1\times\cdots\times B_n$ implies that 
\begin{equation}\label{isThx}
B = B_1 \times\cdots\times B_n\text.
\end{equation} 
The uniqueness of the $B_i$ is trivial.
If $B_1\subseteq C_1\subseteq A_1$ such that $C_1^2\subseteq T_1$, then 
\[B^2=(B_1 \times\cdots\times B_n)^2\subseteq (C_1 \times B_2\times\cdots\times B_n)^2\subseteq T_1 \times\cdots\times T_n=T\text.
\]
Hence $B\in\bset T$ yields that the first inclusion above is an equality, which implies that $B_1=C_1$. Thus $B_1\in \bset {T_1}$ and $B_i\in\bset{T_i}$ for all $i$. This together with \eqref{isThx} proves that $\bset T\subseteq \bset{T_1}\times\cdots\times \bset{T_n}$. 

Finally, to prove the converse inclusion, assume that  $U_i\in\bset{T_i}$ for $i=1,\ldots,n$, and let $U=U_1\times\cdots\times U_n$. Clearly, $U^2\subseteq T_1\times\cdots \times T_n=T$. By Zorn's lemma, there is a $B\in \bset T$ such that $U\subseteq B$.
We already know that {$B_i\in\bset{T_i}$ and \eqref{isThx} holds.}
This together with $U\subseteq B$ yields that $U_i\subseteq B_i$. Comparable blocks of $T_i$ are equal, whence $U_i=B_i$, for all $i$.  Hence $U=B\in\bset T$, proving that $\bset{T_1}\times\cdots\times \bset{T_n} \subseteq \bset T$.
\end{proof}


\begin{proof}[Proof of Theorem~\ref{thmmain}] 
{Assume first that the $\anyvar i$ are tolerance factorable.}
Let $T$ be as in  Proposition \ref{propIndep}\eqref{xhTalma3}.  
Assume that $\sterm$ is a $k$-ary term in the language of $\nopvar$ and $B_1,\ldots,B_k\in\bset T$. By Lemma~\ref{lemmanoskewblock}, there are uniquely determined $B_{ij}\in\bset{T_j}$ such that 
\begin{equation}\label{DdpOsDi}
B_i=B_{i1}\times\cdots\times B_{in}\quad\text{for}\quad i=1,\ldots,k\text.
\end{equation}
Assume that $C$ is in $\bset T$ such that 
\begin{equation}\label{oWkQm}
\setm{\sterm(b_1,\ldots,b_k)} {b_1\in B_1,\ldots, b_k\in B_k}\subseteq C\text.
\end{equation}
According to $\alg A=\alg A_1\times\cdots\times \alg A_n$, we can write $b_i=(b_{i1},\ldots,b_{in})$. 
Since $\sterm$ acts componentwise,
\begin{equation}\label{NkLSp}
\begin{aligned}
\setm{&\sterm(b_1,\ldots,b_k)} {b_1\in B_1,\ldots, b_k\in B_k}\cr
&=\setbm{\bigl(\sterm(b_{11},\ldots, b_{k1}),\ldots, \sterm(b_{1n},\ldots, b_{kn})  \bigr)    }{b_{ij}\in B_{ij}}\cr
&=\setm{\sterm(b_{11},\ldots, b_{k1}) }{b_{i1}\in B_{i1}} \times\cdots\times  \setm{\sterm(b_{1n},\ldots, b_{kn}) }{b_{in}\in B_{in}}\text.
\end{aligned}
\end{equation}
By  Lemma~\ref{lemmanoskewblock}, $C=C_1\times\cdots \times C_n$ with $C_j\in\bset{T_j}$. Combining  this with \eqref{oWkQm} and \eqref{NkLSp}, we obtain that, for $j\in\set{1,\ldots,n}$, 
\begin{equation}\label{isWeicG}
\setm{\sterm(b_{1j},\ldots, b_{kj}) }{b_{ij}\in B_{ij}\text{ for }i=1, \ldots,k}\subseteq C_j\text.
\end{equation} 
This implies the uniqueness of $C_j$ since $\anyvar j$ is tolerance factorable. Therefore, $C$ in \eqref{oWkQm} is uniquely determined, {and we obtain that $\nopvar$ is tolerance factorable.} 

{Next, assume that the $\anyvar i$ are strongly tolerance factorable. 
Observe that \eqref{isWeicG} also yields} that $C_j=\sterm(B_{1j},\ldots,B_{kj})$ in the quotient algebra $\alg A_j/T_j$. 
This, together with \eqref{DdpOsDi} and $C=C_1\times\cdots \times C_n$, implies that $\alg A/T$ is (isomorphic to)
$\alg A_1/T_1\times\cdots\times \alg A_n/T_n$. 
Since  $\anyvar j$ is {strongly} tolerance factorable, we conclude that  $\alg A_j/T_j\in\anyvar j\subseteq\nopvar$. Therefore $\alg A/T\in \nopvar$, proving that $\nopvar$ is {strongly} tolerance factorable.

{Finally, if the tolerances of $\anyvar i$ are the images of its congruences, for $i=1,\ldots, n$, then Proposition~\ref{propIndep} easily implies the same property of $\nopvar$.}
\end{proof}

\begin{proof}[Proof of Example~\ref{exbands}] 
Each of the $\setni ni$ is equivalent to $\Set$, whence it is easy to see that the $\setni ni$ are  strongly tolerance factorable. The operation $\setop n$ witnesses that $\setnn n=\setni n1\vee\cdots\vee\setni nn$ is an independent join. Hence $\setnn n$ is {strongly} tolerance factorable by Theorem~\ref{thmmain}. The three-element algebra $\alg A=\bigl(\set{a,b,c},\setop n)$, where $\setop n$ acts as the first projection, belongs to $\setni n1\subseteq \setnn n$. Consider $T\in\Tol {\alg A}$ determined by $\bset T=\bigl\{\set{ab},\set{bc} \bigr\}$. 
This $T$ witnesses that $\setnn n$ {has proper tolerances}.

Next, consider an arbitrary $\alg A\in \setnn n$. It is of the form $\alg A=\alg A_1\times\cdots\times \alg A_n$, where  $\alg A_i\in\setni ni$ for $i=1,\ldots,n$. 
Let $\sterm$ be an arbitrary term in the language of $\setnn n$. 
Since  $\setni ni$ is  equivalent to $\Set$, $\sterm$ induces a projection on $ A_i$, for $i=1,\ldots,n$. It follows that $\sterm$ induces an operation on $ A$ that  depends on at most $n$ variables.  
On the other hand, if none of the $\alg A_i$ is one-element, then $\setop n$ defines a term function  on $ A$ that depends exactly on $n$ variables.
Thus $n$ is the largest integer $k$ such that all term functions on algebras in $\setnn n$ depend on at most $k$ variables and there exists an algebra in $\setnn n$ with a term function depending exactly on $k$ variables. This proves that $\setnn n$ and $\setnn m$ are non-equivalent if $n\neq m$. 
\end{proof}

\begin{proof}[Proof of Example~\ref{exrotlats}]
Let $\alg A=(A,\vee,\wedge,\rotop n)\in
\rotlat n$ and $T\in\Tol{\alg A}$. 
Then $T$ is also a tolerance of the lattice reduct $(A,\vee,\wedge)$, and $\bset T$ for the lattice reduct is the same as it is for $\alg A$. 
{We claim that, for every  $B\in \bset T$,
\begin{equation}\label{siThKq}
\rotop n(B):=\setm{\rotop n(b)}{b\in B}\in \bset T\text.
\end{equation}
By Zorn's lemma, there is a $C\in \bset T$ such that $\setm{\rotop n(b)}{b\in B}\subseteq C$. Since $\rotop n^{-1}=\rotop n^{n-1}$ preserves $T$, $\setm{\rotop n^{-1}(c)}{c\in C}^2\subseteq T$.  This together with $B\subseteq \setm{\rotop n^{-1}(c)}{c\in C}$ and $B\in\bset T$ yields that $B= \setm{\rotop n^{-1}(c)}{c\in C}$. Therefore, $g_n(B)=C\in\bset T$, proving  \eqref{siThKq}. }

For the lattice operations, $B$ in  \eqref{mfaXCDsF} is uniquely determined 
since $\lat$ is (strongly) tolerance factorable by G. Cz\'edli~\cite{czg82}. 
By \eqref{siThKq}, the same holds for $g_n$.
Thus $\alg A/T$ makes sense. $(A/T,\vee,\wedge)$ is a lattice since  $\lat$ is {strongly} tolerance factorable.
We conclude from \eqref{siThKq} that 
$\rotop n$ is a permutation on $\alg  A/T$, whose $n$-th power is the identity map.  Finally, assume that $B\vee C=D$ in $\alg A/T$;
the case of the meet is similar.
Then, by \eqref{siThKq} and $\setm{b\vee c}{b\in B,\text{}c\in C}\subseteq D$,  
\begin{align*}
\setm{x\vee y}{x\in \rotop n(B),\text{ } y\in\rotop n(C)} 
= \setm{\rotop n(b)\vee \rotop n(c)}{b\in B,\text{ } c\in C}\cr
=\setm{\rotop n(b\vee c)}{b\in B,\text{ } c\in C} \subseteq \setm{\rotop n(d)}{d\in D} =\rotop n(D)\text.
\end{align*}
Hence $\rotop n(B)\vee\rotop n(C)=\rotop n(D)$, that is, $\rotop n$ is an automorphism of $(A/T,\vee,\wedge)$.
Therefore, $\rotlat n$ {is strongly tolerance factorable. It has proper tolerances since so has $\lat$, which is equivalent to the subvariety $\rotlat 1$ of $\rotlat  n$. } 

The boolean lattice with $n$ atoms allows an automorphism  $\gf$ of order $n$ such that the subgroup generated by $\gf$ acts transitively on the set of atoms, but no such automorphism of smaller  order is possible. This implies easily that $\rotlat m$ is not equivalent to $\rotlat k$ if $m\neq k$. Since $\rotlat n$ is congruence distributive, it is not equivalent to $\setnn m$.
\end{proof}

\begin{proof}[Proof of Example~\ref{excomb}]
Since $\sophop$ takes care of independence, 
Examples~\ref{exbands} and \ref{exrotlats} together with Theorem~\ref{thmmain} yield that  $\combvar mn$ {is strongly tolerance factorable and it has proper tolerances}. Suppose for a contradiction that $(m,n)\neq (u,v)$ but  $\combvar mn$ is equivalent to  $\combvar uv$.

Suppose first that $m=u$ and $n\neq v$. Let, say, $v<n$. 
Take the $2^n$-element  
$\alg A\in  \setnntamp{\setnn n}{m}  \subseteq \combvar mn$ for which all the $\alg A_i$ in 
Proposition \ref{propIndep}\eqref{xhTalma1} are 2-element. Let $\sterm$ be a binary term in the language of $\combvar mn$. Since all terms induce projections on $\alg A_i$, the identity 
$\sterm(x,\sterm(y,x))=x$ 
holds in $\alg A_i$ for $i=1,\ldots,n$. Therefore, $\alg A$ satisfies the same identity, for every binary term $\sterm$.
Observe that, up to now, we did not use the assumption on the size of $\alg A_i$,  whence  
\begin{equation}\label{sTmidbInhowk}\sterm(x,\sterm(y,x))=x \text{ holds in }\setnn n\text{, for all binary terms }s\text.
\end{equation}%
By the assumption, there is a 
$\combvar mv${-}structure $\alg B$ on the set $A$ such that $\alg B$ and $\alg A$ have the same term functions. 
By the definition of $\combvar mv{= \combvar uv}$, 
$\alg B$ is (isomorphic to) $\alg C\times \alg D$, where $\alg C\in {{\rottamp{\rotlat m}{v}}}$  and $\alg D\in { { \setnntamp{\setnn v}{m} } }$. 
Since $\alg C$ is a homomorphic image of $\alg B$ and $\alg B$ has the same term functions as $\alg A$, the identity $\sterm(x,\sterm(y,x))=x$ holds in $\alg C$ for all binary terms $\sterm$.  Thus $\alg C$ is one-element since otherwise $\sterm(x,y)=x\vee y$ would fail this identity. Hence the term functions of $\alg B$ are the same as those of its $\setnn v$-reduct. 
Now, we can obtain a contradiction the same way as in the last paragraph of the proof of Example~\ref{exbands}: $\alg A$ has an $n$-ary term function that depends on all of its variables while all term functions of $\alg B$ depend on at most $v$ variables. This proves that $n=v$.

Secondly, we suppose  that $m\neq u$. Let, say, $m>u$. Consider the algebra $\alg A\in {\rottamp{\rotlat m}{n}}\subseteq \combvar mn$ such that 
the $\rotlat m$-reduct of $\alg A$ is the $2^m$-element boolean lattice and $g_m$ is a lattice automorphism of order $m$ that acts transitively on the set of atoms. (That is, the restriction of $g_m$ to the set of atoms is a cyclic permutation of order $m$.)
Since $\combvar mn$ is equivalent to $\combvar un=\combvar uv$, there exist algebras $\alg C\in {\rottamp{\rotlat u}n}$ and $\alg D\in {\setnntamp{\setnn n}u}$ such that 
$\alg B:=\alg C\times \alg D\in \combvar un$ is equivalent to $\alg A$.  Observe that $\alg D$, which is a homomorphic image of $\alg B$, has a lattice reduct.  Hence, like in the firts part of the proof, \eqref{sTmidbInhowk} easily implies that $\alg D$ is a one-element algebra. Therefore,  $\alg A$ is equivalent to $\alg C$, that is, to a member of  ${\rottamp{\rotlat u}{n}}$. Hence the $\rotlat m$-reduct of $\alg A$ is equivalent to a member of $\rotlat u$. This leads to a contradiction the same way as in the last paragraph of the proof of Example~\ref{exrotlats}.
\end{proof}

\begin{proof}[{Proof of Example~\ref{hnplMrl}}] {Consider the lattice $L$ in Figure~\ref{figone} as an algebra of $\tlat$. A tolerance $T\in\Tol L$ is given by its blocks $A=[a_0,a_1]$, \dots, $E=[e_0,e_1]$. (It is easy to check, and it follows even more easily  from G.~Cz\'edli~\cite[Theorem 2]{czg82}, that $T$ is a tolerance.)}
Since 
\[\setm{\tjoin(x,y,z)}{x\in A,\,\, y\in B,\,\, z\in C}=[c_0,a_1],
\]
{this set} is a subset of two {distinct} blocks, $A$ and $C$. Hence $\tlat$ is not tolerance factorable. The rest is trivial. 
\end{proof}

\begin{figure}
\includegraphics[scale=1.0]{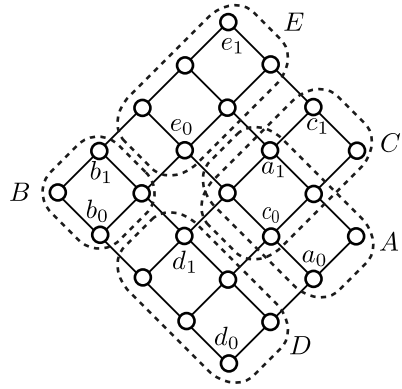}
\caption{$L$ and the blocks of $T$} 
\label{figone}
\end{figure}

\begin{ackno} The authors thank 
Paolo Lipparini for helpful comments and for calling their attention to H.\ Werner~\cite{werner}.
\end{ackno}

\end{document}